\documentclass[11pt]{amsart}%
\usepackage{amsmath}
\usepackage{graphicx}
\usepackage{amsfonts}
\usepackage{amssymb}
\usepackage{hyperref}%
\setcounter{MaxMatrixCols}{30}
\providecommand{\U}[1]{\protect\rule{.1in}{.1in}}
\newtheorem{theorem}{Theorem}[section]
\theoremstyle{plain}

\newtheorem{corollary}{Corollary}[section]

\newtheorem{lemma}{Lemma}[section]

\numberwithin{equation}{section}

\def\re{\mathbb R}

\DeclareMathOperator{\sym}{Sym}
\DeclareMathOperator{\diag}{diag} 
\DeclareMathOperator{\spn}{span}
\begin{document}
\title[Constant rank theorem]{A constant rank theorem for special Lagrangian Equations\\}
\author{W. Jacob Ogden}
\address{Department of Mathematics, Box 354350\\
University of Washington\\
Seattle, WA 98195}
\email{wjogden@uw.edu}
\author{Yu YUAN}
\address{Department of Mathematics, Box 354350\\
University of Washington\\
Seattle, WA 98195}
\email{yuan@math.washington.edu}
\date{May 28, 2024\\
\indent 2020\emph{ Mathematics subject classification.} 35J60, 35B08, 35B50.\\ \indent
\emph{Key words and phrases.} Constant rank, special Lagrangian equation, quadratic Hessian equation.}

\begin{abstract}
Constant rank theorems are obtained for saddle solutions to the special Lagrangian equation
and the quadratic Hessian equation. The argument also leads to Liouville type
results for the special Lagrangian equation with subcritical phase, matching the known
rigidity results for semiconvex entire solutions to the quadratic Hessian
equation.

\end{abstract}
\maketitle

\section{\bigskip Introduction}

In this note, we establish constant rank results for the special Lagrangian equation%

\begin{equation}
\sum_{i=1}^{n}\arctan\lambda_{i}=\Theta, \label{EsLag}%
\end{equation}
and the  quadratic Hessian equation on the positive branch
\begin{equation}
\sigma_{2}\left(  D^{2}u\right)  =\sum_{1\leq1<j\leq n}\lambda_{i}\lambda
_{j}=1\ \text{with }\lambda_{1}+\cdots+\lambda_{n}>0, \label{Esigma2}%
\end{equation}
where $\lambda_{1}\leq\cdots\leq\lambda_{n}$ are the eigenvalues of the Hessian $D^{2}u$ and $\Theta$ is constant.

A constant rank theorem asserts that the minimum or maximum
eigenvalue of the Hessian of a solution to a certain elliptic equation obeys
the strong minimum or maximum principle. In other words,  the Hessian shifted by the corresponding scalar matrix, $D^{2}u-\lambda_{\min} I$ or $\lambda_{\max} I-D^{2}u$, has
constant rank. That certain condition for fully nonlinear elliptic equations is the inverse-convexity 
of Alvarez-Lasry-Lions in \cite{ALL}, employed to obtain the constant rank
result for those various equations by Caffarelli-Guan-Ma in \cite{CGM}.

By connecting the convexity of the level sets of equation \eqref{EsLag} at
critical or supercritical phase $\left\vert \Theta\right\vert \geq\left(
n-2\right)  \pi/2$ \cite{Y2}, to the inverse-convexity of \eqref{EsLag} at
subcritical phase $\left\vert \Theta\right\vert <\left(  n-2\right)  \pi/2,$
we obtain the following constant rank theorem for the special Lagrangian equation
\eqref{EsLag}, which is new for saddle solutions.

\begin{theorem}\label{CrankSlag}
If $u$ is a smooth solution of \eqref{EsLag} in a (connected) domain with
$\arctan\lambda_{\min}\geq\left(  \Theta-\pi\right)  /n$ and $\lambda_{\min}$
attains a local minimum at an interior point, then $\lambda_{\min}$ is
constant. Furthermore, if in addition the local minimum $\arctan\lambda_{\min
}>\left(  \Theta-\pi\right)  /n,$ then there is a fixed direction $e$ such
that the Hessian $D^{2}u$ splits as $\lambda_{\min}\equiv u_{ee}\equiv
const.$  If $\lambda_{\max}$ attains a local interior maximum, 
the corresponding conclusions for $\lambda_{\max}$ also hold, 
provided  $\arctan\lambda_{\max}\leq\left(
\Theta+\pi\right)  /n$ and $\arctan\lambda_{\max}<\left(  \Theta+\pi\right)
/n$ respectively.
\end{theorem}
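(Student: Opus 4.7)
The natural framework is that of Caffarelli--Guan--Ma in \cite{CGM}, whose structural hypothesis for the constant rank phenomenon is the \emph{inverse-convexity} of the operator in the sense of Alvarez--Lasry--Lions \cite{ALL}. The introduction already tells us that the essential ingredient is a correspondence between convexity of the level set $\{\sum\arctan\lambda_i=\Theta\}$ at critical or supercritical phase (Yuan \cite{Y2}) and inverse-convexity at subcritical phase. My plan is therefore to (i) establish this correspondence in the regime $\arctan\lambda_{\min}\geq(\Theta-\pi)/n$, and (ii) feed the result into the CGM test-function argument.

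\textbf{Step 1: Inverse-convexity from shifted supercritical geometry.} Working at the interior point $p_0$ where $\lambda_{\min}$ attains a local minimum of multiplicity $r$, I would factor off the minimum eigenvalue and look at the remaining $n-r$ eigenvalues $\lambda_{r+1},\dots,\lambda_n$, which solve the $(n-r)$-variable special Lagrangian expression at phase $\Theta-r\arctan\lambda_{\min}(p_0)$. The hypothesis $\arctan\lambda_{\min}\geq(\Theta-\pi)/n$ pushes this reduced phase into (or beyond) the critical/supercritical range for the $(n-r)$-variable equation, so the level set in the reduced eigenvalue variables is convex. A direct calculation of $\partial^2 F/\partial u_{ij}\partial u_{k\ell}$ for $F(D^2u)=\sum\arctan\lambda_i$ (using the standard orthonormal-frame formulas on the space of symmetric matrices) should then translate this convexity into the inverse-convexity inequality of \cite{ALL,CGM} along the subspace orthogonal to the minimum eigenspace.

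\textbf{Step 2: CGM test function and differential inequality.} With inverse-convexity in place, I take the standard CGM test quantity, namely a suitable combination of the $(r+1)\times(r+1)$ principal minors of the shifted Hessian $D^2u-\lambda_{\min}(p_0)I$, for example
\begin{equation*}
\phi \;=\; \sigma_{r+1}\bigl(D^{2}u-\lambda_{\min}(p_0)I\bigr)+\varepsilon\,\sigma_{r+2}\bigl(D^{2}u-\lambda_{\min}(p_0)I\bigr),
\end{equation*}
so that $\phi\geq 0$ vanishes at $p_0$ exactly when $\lambda_{\min}$ has multiplicity $\geq r$. Differentiating twice, using the special Lagrangian equation and the inverse-convexity from Step 1 to absorb the dangerous third-derivative terms, I would arrive at the differential inequality
\begin{equation*}
F^{ij}\phi_{ij}\;\leq\;C\bigl(\phi+|\nabla\phi|\bigr),
\end{equation*}
valid in a neighborhood of $p_0$, where $F^{ij}=\partial F/\partial u_{ij}$ is uniformly elliptic along the solution.

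\textbf{Step 3: Minimum principle and splitting.} The strong minimum principle then gives $\phi\equiv0$ on the connected component, so $\lambda_{\min}$ has locally constant multiplicity and, by continuity and connectedness, $\lambda_{\min}$ itself is a constant. For the splitting statement under the strict inequality $\arctan\lambda_{\min}>(\Theta-\pi)/n$, I would pick a smooth minimum-eigenspace direction field $e$ (available once the multiplicity is constant), differentiate \eqref{EsLag} along $e$ to see that $u_e$ satisfies a linear elliptic equation with $u_{ee}-\lambda_{\min}$ as a nonnegative auxiliary quantity vanishing on an open set, and then invoke the strong minimum principle a second time to conclude $u_{ee}\equiv\lambda_{\min}=\mathrm{const}$ and that $e$ is a fixed direction. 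The $\lambda_{\max}$ version follows by symmetry under $u\mapsto -u$, $\Theta\mapsto-\Theta$.

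\textbf{Main obstacle.} The main difficulty lies in Step 1: pinning down the precise algebraic bridge from convexity of the reduced level set at shifted critical/supercritical phase to inverse-convexity of the original operator at subcritical phase. One must identify the exact eigenvalue bounds making the reduced phase supercritical while simultaneously controlling the cross terms between the minimum eigenspace and its complement in $\partial^2F/\partial u_{ij}\partial u_{k\ell}$. Once this structural identity is isolated, Steps 2 and 3 should reduce to careful but standard bookkeeping in the CGM framework.
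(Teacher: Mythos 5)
Your Step 1 is where the argument breaks down, and it is not how the inverse-convexity actually arises. Factoring off the minimum eigenvalue and claiming that the remaining $n-r$ eigenvalues sit at a critical/supercritical phase for the $(n-r)$-variable equation is false under the stated hypothesis: take $\Theta=0$, $n=5$, $r=1$, with $\arctan\lambda_{\min}\geq-\pi/5$; the reduced phase $\Theta-\arctan\lambda_{\min}$ then lies in $(-\pi/2,\pi/5]$, far below the supercritical threshold $(n-3)\pi/2=\pi$ for four variables. The paper's bridge is entirely different: one rotates the whole Lagrangian graph $(x,Du(x))$ by the angle $\beta=\pi/2+(\Theta-\pi)/n$, i.e.\ passes to $\bar x=cx+sDu$, $D\bar u=-sx+cDu$, equivalently $D^{2}\bar u=-aI-(1+a^{2})(D^{2}u-aI)^{-1}$ with $a=\tan\bigl((\Theta-\pi)/n\bigr)$. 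The hypothesis $\arctan\lambda_{i}>(\Theta-\pi)/n$ is exactly what keeps every rotated angle in $(-\pi/2,\pi/2)$ so that $\bar u$ exists, and the rotated potential solves \eqref{EsLag} at phase exactly $(2-n)\pi/2$, where \cite{Y2} gives concavity of the level set. From this one gets superharmonicity of the pure second derivatives $\bar u_{\bar e\bar e}$ with respect to the linearized operator, and the strong minimum principle applies directly — no $\sigma_{r+1}+\varepsilon\sigma_{r+2}$ test function is needed in the strict case. The CGM machinery (or the viscosity Theorem 4.1) is invoked only in the borderline case where the attained minimum equals $\tan\bigl((\Theta-\pi)/n\bigr)$ and the rotation degenerates; there the inverse-convexity of the operator in $A=D^{2}u-aI$ is read off from the same rotation formula, not from a reduced-dimension phase count.

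Your Step 3 also cannot work as written. Constant rank (or constancy of $\lambda_{\min}$) does not by itself yield a fixed eigendirection: the paper explicitly recalls the counterexamples of Korevaar--Lewis \cite{KL} and Sz\'ekelyhidi--Weinkove \cite{SW1} where the Hessian does not split, so a second application of the minimum principle to a chosen direction field $e$ is not a valid mechanism. In the paper the splitting is read off from the rotated graph identity: once $\bar u_{\bar 1\bar 1}\equiv\tan\bar\delta$ is constant, the representation forces $u_{1}(x)=\tan(\bar\delta+\beta)\,x_{1}$, which is why the eigendirection is fixed — and why the strict inequality $\arctan\lambda_{\min}>(\Theta-\pi)/n$ is needed (it guarantees the rotation is nondegenerate), whereas in the borderline case only constancy of $\lambda_{\min}$, not splitting, is claimed. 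Your proposal does not distinguish these two regimes, and without the rotation neither the inverse-convexity nor the splitting can be recovered.
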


One application of Theorem \ref{CrankSlag} is the following rigidity result for homogeneous degree 2 solutions.

\begin{corollary} \label{2homogeneous}
If $u$ is a smooth degree 2 homogeneous solution of \eqref{EsLag} on $\re^n \setminus \{0\},$ and $\arctan\lambda_{\min}>\left(  \Theta-\pi\right)  /n$ or
$\arctan\lambda_{\max}<\left(  \Theta+\pi\right)  /n,$ then $u$ is a quadratic polynomial.
\end{corollary}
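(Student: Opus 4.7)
The plan is to induct on the dimension $n$. The base case $n=1$ is immediate: equation \eqref{EsLag} becomes $u''=\tan\Theta$, and degree-$2$ homogeneity forces $u=\frac{1}{2}(\tan\Theta)x^{2}$. For the inductive step, the crucial observation is that degree-$2$ homogeneity of $u$ makes $D^{2}u$ degree-$0$ homogeneous, so $\lambda_{\min}$ and $\lambda_{\max}$ are constant along rays from the origin, and in particular attain their extrema on the compact sphere $S^{n-1}\subset\re^{n}\setminus\{0\}$ at interior points of the domain. This supplies the local-interior-extremum hypothesis of Theorem \ref{CrankSlag} for free.

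I will carry out the $\lambda_{\min}$ case; the $\lambda_{\max}$ case is symmetric. The strict inequality $\arctan\lambda_{\min}>(\Theta-\pi)/n$ triggers the strong form of Theorem \ref{CrankSlag}, producing a fixed unit vector $e$ with $u_{ee}\equiv c$ constant and $D^{2}u$ block diagonal with respect to $e$ and its orthogonal complement. Taking coordinates with $e=e_{n}$, I read off $u_{in}=0$ for $i<n$, so a connectivity argument on the slices $\{x_{n}=\text{const}\}\cap(\re^{n}\setminus\{0\})$ (all connected for $n\geq 3$; when $n=2$, continuity of $u_{n}$ across nearby slices bridges the only disconnected slice $\{x_{n}=0\}\setminus\{0\}$) shows that $u_{n}$ depends only on $x_{n}$. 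Combined with $u_{nn}=c$ and the degree-$1$ homogeneity of $u_{n}$ — which kills the integration constant — this gives $u_{n}=cx_{n}$, hence
\[
u(x',x_{n})=\frac{c}{2}x_{n}^{2}+v(x'),
\]
where $v\in C^{\infty}(\re^{n-1}\setminus\{0\})$ is degree-$2$ homogeneous and solves the $(n-1)$-dimensional special Lagrangian equation with phase $\Theta':=\Theta-\arctan c$.

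To close the induction I verify that the strict gap transfers to $v$. Since $c=\lambda_{\min}(D^{2}u)$, every eigenvalue $\mu_{i}$ of $D^{2}v$ satisfies $\mu_{i}\geq c$, and the elementary rearrangement $n\arctan c>\Theta-\pi\iff(n-1)\arctan c>\Theta'-\pi$ yields $\arctan\mu_{\min}(v)\geq\arctan c>(\Theta'-\pi)/(n-1)$. The inductive hypothesis then forces $v$, and hence $u$, to be a quadratic polynomial.

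The main step I expect to demand care is not the induction proper but globalizing the pointwise splitting from Theorem \ref{CrankSlag} into the genuine product structure above: one must verify that $u_{n}$ is globally $cx_{n}$ on $\re^{n}\setminus\{0\}$, which relies on the connectedness of slices and on using degree-$1$ homogeneity of $u_{n}$ to eliminate an a priori additive constant. Once this product structure is secured, the remainder of the proof is arithmetic on the phase gap.
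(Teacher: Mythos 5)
Your proof is correct and takes essentially the same route as the paper: degree-$2$ homogeneity makes $\lambda_{\min}$ attain an interior minimum, the strict-inequality case of Theorem \ref{CrankSlag} splits off a one-variable quadratic, and the rearrangement $n\arctan c>\Theta-\pi\iff(n-1)\arctan c>\Theta'-\pi$ with $\Theta'=\Theta-\arctan c$ lets the argument iterate down in dimension. The only difference is presentational: you make explicit the globalization of the splitting on the punctured domain (connectedness of slices and homogeneity killing the integration constant), a step the paper leaves implicit.
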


As a more general by-product of the proof of Theorem \ref{CrankSlag}, we find a Liouville type result for entire solutions of
\eqref{EsLag}, which is new at subcritical and critical phases $\left\vert
\Theta\right\vert \leq\left(  n-2\right)  \pi/2$ in dimension $n\geq5.$

\begin{theorem} \label{rigidity}
If $u$ is an entire smooth solution of \eqref{EsLag} on $\mathbb{R}^{n}$ with
$\arctan\lambda_{\min}\geq\gamma>\left(  \Theta-\pi\right)  /n$ or
$\arctan\lambda_{\max}\leq\gamma<\left(  \Theta+\pi\right)  /n,$ then $u$ is a
quadratic polynomial.
\end{theorem}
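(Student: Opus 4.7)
The plan is to prove Theorem \ref{rigidity} by induction on the ambient dimension $n$, using the splitting portion of Theorem \ref{CrankSlag} for the dimensional reduction at each step. The base case $n=1$ is trivial: $u''\equiv\tan\Theta$ forces $u$ to be quadratic. For the inductive step, assume the result in dimensions less than $n$ and, without loss of generality, take the first alternative $\arctan\lambda_{\min}\geq\gamma>(\Theta-\pi)/n$ on $\mathbb{R}^n$ (the $\lambda_{\max}$ case being symmetric). Set $\mu:=\inf_{\mathbb{R}^n}\lambda_{\min}\in[\tan\gamma,\infty)$, so that $\arctan\mu\geq\gamma>(\Theta-\pi)/n$.

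If $\mu$ is attained at an interior point, then the ``Furthermore'' portion of Theorem \ref{CrankSlag} produces a fixed unit direction $e$ with $u_{ee}\equiv\mu$ on $\mathbb{R}^n$. Decomposing $u(x)=\tfrac{\mu}{2}\langle x,e\rangle^2+\tilde u(x')$ (modulo an affine term, $x'\in e^\perp\cong\mathbb{R}^{n-1}$), one sees $\tilde u$ is an entire smooth solution of the $(n-1)$-dimensional equation \eqref{EsLag} with phase $\Theta':=\Theta-\arctan\mu$, still satisfying $\arctan\lambda_{\min}(\tilde u)\geq\gamma$. The needed inequality $\gamma>(\Theta'-\pi)/(n-1)$ simplifies, upon using $\arctan\mu\geq\gamma$, to $n\gamma>\Theta-\pi$, which is the standing hypothesis. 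Induction then forces $\tilde u$, and hence $u$, to be a quadratic polynomial.

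If $\mu$ is not attained, one realizes it through a limiting argument. Pick $x_k\in\mathbb{R}^n$ with $\lambda_{\min}(x_k)\to\mu$ and normalize: $u_k(y):=u(y+x_k)-u(x_k)-\nabla u(x_k)\cdot y$. The semi-convexity $\lambda_{\min}(u_k)\geq\tan\gamma$, combined with interior Hessian estimates for semi-convex solutions of \eqref{EsLag}, gives $C^{2,\alpha}_{\mathrm{loc}}$ equicontinuity of $\{u_k\}$; extract a subsequential limit $u_\infty$, an entire solution with $\lambda_{\min}(D^2u_\infty)(0)=\mu$. The attained case applied to $u_\infty$ makes $u_\infty$ a quadratic polynomial with constant Hessian $A$, $\lambda_{\min}(A)=\mu$. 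Transferring this rigidity back to $u$---for instance via the parallel blow-downs $u_R(x):=u(Rx)/R^2$, whose uniform semi-convexity produces a subsequential limit that is homogeneous of degree $2$ and satisfies the same hypotheses, so Corollary \ref{2homogeneous} forces it to be quadratic---then promotes $u$ itself to a quadratic polynomial, completing the induction.

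The main obstacle is this non-attained case. Producing a genuine translation limit requires uniform upper Hessian estimates for semi-convex entire solutions of the special Lagrangian equation, and closing the loop from quadratic $u_\infty$ back to quadratic $u$ demands an additional rigidity ingredient. The blow-down route is attractive because Corollary \ref{2homogeneous}, already a direct consequence of Theorem \ref{CrankSlag}, settles the homogeneous degree-$2$ target, leaving only the convexity/rigidity step that a semi-convex entire solution asymptotic to a quadratic is itself quadratic.
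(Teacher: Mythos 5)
There is a genuine gap here, and it sits exactly where you flag it yourself: the non-attained case. For an entire solution $\lambda_{\min}$ will in general not attain its infimum, so the splitting clause of Theorem \ref{CrankSlag} (and hence your induction) never gets to act directly on $u$; the non-attained case is the whole content of Theorem \ref{rigidity}, not a technical remainder. Your two proposed repairs both rest on unproven ingredients that are at least as strong as the theorem itself. First, the translation-limit argument invokes ``interior Hessian estimates for semi-convex solutions of \eqref{EsLag}'' to get $C^{2,\alpha}_{\mathrm{loc}}$ compactness; no such estimate is available as a black box at subcritical phase --- singular solutions exist there, and the only mechanism in this setting that converts the one-sided bound $\arctan\lambda_{\min}\geq\gamma$ into two-sided Hessian control is precisely the rotation the paper performs. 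Second, even granting a quadratic limit $u_\infty$, nothing transfers that rigidity back to $u$: your blow-down $u_R(x)=u(Rx)/R^2$ has no reason to subconverge in $C^2$ to a solution that is smooth on $\mathbb{R}^n\setminus\{0\}$ and homogeneous of degree $2$ (homogeneity of blow-downs requires a monotonicity-type argument, and smoothness of the limit is tied to the open regularity question the paper mentions), so Corollary \ref{2homogeneous} cannot be applied, and ``a semiconvex entire solution asymptotic to a quadratic is itself quadratic'' is an additional rigidity statement you have not proved.

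The paper's proof bypasses attainment entirely. With $\alpha=(\Theta-\pi)/n$ and $\delta=(\gamma-\alpha)/2>0$, it applies the rotation \eqref{b-roration} with angle $\tilde\beta=\pi/2+\alpha+\delta$; the hypothesis $\theta_i\geq\gamma$ turns into the two-sided bound $\bar\theta_i\in(-\pi/2+\delta,\pi/2-\delta)$, so $\bar u$ has bounded Hessian and solves the uniformly elliptic equation \eqref{EsLagC-}, whose level set is concave. A distance-expansion estimate (using convexity of $u+\tan|\gamma|\,|x|^2/2$) shows the rotated representation is valid on all of $\mathbb{R}^n$, so Evans--Krylov's interior $C^{2,\alpha}$ estimate, scaled and passed to the limit, forces $D^2\bar u$ to be constant, hence the Lagrangian graph is a plane and $u$ is quadratic. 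If you want to salvage your structure, you would have to prove the compactness and blow-down rigidity inputs independently --- but the rotation argument is exactly the tool that supplies them, at which point the induction becomes unnecessary.
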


Relying on the same inverse-convexity used to derive the Liouville type result for almost convex entire solutions to \eqref{Esigma2} in \cite{CY}, we have the following
constant rank result for the quadratic Hessian equation \eqref{Esigma2}.

\begin{theorem}\label{Cranksigma2}
If $u$ is a smooth solution of \eqref{Esigma2} in a (connected) domain with
$\lambda_{\min}\geq-\sqrt{2/\left[  n\left(  n-1\right)  \right]  }$ and
$\lambda_{\min}$ attains a local minimum at interior point, then
$\lambda_{\min}$ is constant. Furthermore, if in addition $\lambda_{\min
}>-\sqrt{2/\left[  n\left(  n-1\right)  \right]  },$ then there is a fixed
direction $e$ such that the Hessian $D^{2}u$ splits as $\lambda_{\min}\equiv
u_{ee}\equiv const.$
\end{theorem}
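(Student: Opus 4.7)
The plan is to apply the Caffarelli--Guan--Ma scheme \cite{CGM} for constant rank theorems, whose structural input---the inverse-convexity condition of Alvarez--Lasry--Lions \cite{ALL}---is supplied, for the positive branch of \eqref{Esigma2} restricted to $\lambda_{\min}\geq-\sqrt{2/[n(n-1)]}$, by the computation of Chang--Yuan \cite{CY} (the same ingredient that powers their Liouville theorem for almost convex entire solutions). The two pieces to feed into the scheme are (i) uniform ellipticity of the linearized operator $F^{ij}=\partial\sigma_{2}/\partial u_{ij}$ on the positive branch, and (ii) the inverse-concavity of $\sigma_{2}$ on the almost-convex region.

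Concretely, suppose $\lambda_{\min}$ attains an interior local minimum $m\geq-\sqrt{2/[n(n-1)]}$ at $x_{0}$. Set $W=D^{2}u-mI$, so that $W\geq 0$ in a neighborhood of $x_{0}$ with $\ker W(x_{0})\neq\{0\}$, and let $k=\dim\ker W(x_{0})$. As test function take $\phi(x)=\sigma_{n-k+1}(W(x))\geq 0$, which vanishes exactly where $\operatorname{rank}W\leq n-k$. The main task is to prove a differential inequality of the form
\[
F^{ij}\partial_{ij}\phi \;\leq\; C\bigl(\phi+|\nabla\phi|\bigr)
\]
in a punctured neighborhood of $x_{0}$. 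Once this is in hand, the strong minimum principle for linear uniformly elliptic operators forces $\phi\equiv 0$, so $W$ has nullity at least $k$ throughout the neighborhood, i.e., $\lambda_{\min}\equiv m$; connectedness of the domain propagates this to the full domain in the standard way.

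To establish the inequality I would twice-differentiate \eqref{Esigma2} to obtain $F^{ij}u_{ij\alpha\alpha}=-F^{ij,pq}u_{ij\alpha}u_{pq\alpha}$, expand $\sigma_{n-k+1}(W)$ in a frame diagonalizing $W$ at the base point, and use the inverse-concavity from \cite{CY} to dispatch the leftover cubic terms with the correct sign---the threshold $-\sqrt{2/[n(n-1)]}$ being exactly the boundary of the region on which that concavity survives. For the splitting conclusion, under the strict hypothesis $m>-\sqrt{2/[n(n-1)]}$ the inverse-concavity becomes strict along null directions, and differentiating $Wv\equiv 0$ along a locally chosen null eigenvector then shows that $\ker W$ is a Euclidean-parallel (hence constant) subbundle, producing a fixed direction $e$ with $u_{ee}\equiv m$.

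The main obstacle I expect is the differential inequality itself, for two reasons. First, eigenvalues of $W$ may coalesce, so $\sigma_{n-k+1}(W)$ and its derivatives have to be handled either through a ratio such as $\sigma_{n-k+1}(W)/\sigma_{n-k}(W)$ or in the viscosity sense, as in \cite{CGM}. Second, one must track the sign in the third-order algebra precisely enough that the almost-convex inverse-concavity of $\sigma_{2}$ enters with the $-$ sign demanded by the CGM scheme rather than being wasted into the right-hand side; this is what pins the sharp constant $-\sqrt{2/[n(n-1)]}$ and is the heart of the argument.
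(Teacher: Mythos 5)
There is a genuine gap, and it sits exactly where you locate ``the heart of the argument.'' You shift by the \emph{attained} local minimum, setting $W=D^2u-\lambda_{\min}(x_0)I$, and then claim the inverse-convexity needed for the CGM differential inequality is ``supplied by \cite{CY}.'' It is not. The Chang--Yuan computation gives concavity only for the shift by the fixed threshold constant $m=\sqrt{2/[n(n-1)]}$: writing $\mu_i=(\lambda_i+m)^{-1}$, the constant terms cancel precisely because $\binom{n}{2}m^2=1$, and the equation collapses to $\sigma_{n-1}(\mu)/\sigma_{n-2}(\mu)=1/[(n-1)m]$, a concave operator. For an intermediate shift $c\in(-m,0)$ one gets instead $\sigma_2(A^{-1}+cI)=\sigma_2(A^{-1})+c(n-1)\operatorname{tr}(A^{-1})+\binom{n}{2}c^2$, a sum of a convex and (since $c<0$) a concave function of $A$, and the required structure is neither in \cite{CY} nor obvious; your scheme would have to prove it. The paper avoids this entirely by splitting into two cases: when $\lambda_{\min}>-m$ it does \emph{not} use a constant rank theorem at all, but the Legendre--Lewy transform $w=(u+m|x|^2/2)^*$, the concavity of $\Lambda(D^2w)=\sigma_{n-1}/\sigma_{n-2}$, and a strong \emph{maximum} principle for the pure second derivatives $w_{ee}$ (so no eigenvalue-regularity issues), noting $\mu_{\max}=(\lambda_{\min}+m)^{-1}$ attains a local maximum; the CGM machinery (with the threshold shift $D^2u+mI$, where the kernel is genuinely nontrivial) is invoked only in the borderline case $\lambda_{\min}(x_0)=-m$.

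The splitting step is also not a proof as written. Constancy of $\lambda_{\min}$ does not imply that $\ker W$ is a parallel subbundle, and ``strict inverse-concavity along null directions'' plus differentiating $Wv\equiv0$ does not produce one: the minimal eigendirection of a constant-rank Hessian can rotate, which is exactly the content of the Korevaar--Lewis and Sz\'ekelyhidi--Weinkove examples cited in the introduction (and why the theorem asserts no splitting in the borderline case). The paper's mechanism for the fixed direction is different: once the strong maximum principle gives $w_{11}\equiv l\equiv\mu_{\max}$ for the transformed potential, the gradient graph $(x,Du(x)+mx)=(Dw(y),y)$ splits off a line, whence $u_1(x)=(l^{-1}-m)x_1$ and $u_{11}\equiv\lambda_{\min}$ in a genuinely fixed direction. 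So your outline would need both a new inverse-convexity statement at non-threshold shifts and a different argument for the splitting; as it stands, only the borderline half of your plan (threshold shift, CGM/viscosity constant rank) matches something that is actually available.
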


A sufficient condition for fully nonlinear elliptic equations to have a lower 
constant rank theorem is convexity. But \eqref{EsLag} with convex solutions 
and the equivalent equation of \eqref{Esigma2}, $\ \Lambda\left(
D^{2}u\right)  =\sqrt{\sigma_{2}\left(  D^{2}u\right)  }=1,$ are concave, not convex.
A relaxed convexity, as alluded to earlier, the inverse-convexity of a
fully nonlinear elliptic equation $G\left(  D^{2}u\right)  =c,$ that is,
convexity of $G\left(  A^{-1}\right)  $ in terms of $A,$ permits a strong
minimum principle for the minimum eigenvalue of the Hessian of solutions. In
terms of the Legendre transform $u^{\ast}=w\left(  y\right)  $ of strongly
convex solution $u\left(  x\right)  $ to $G\left(  D^{2}u\right)  =c,$ as%
\[
\left(  x,Du\left(  x\right)  \right)  =\left(  Dw\left(  y\right)  ,y\right)
\ \text{and then }D^{2}u=\left(  D^{2}w\right)  ^{-1},
\]
the equation $G\left(  D^{2}u\right)  =c$  becomes a concave fully nonlinear elliptic equation%
\[
\Lambda\left(  D^{2}w\right)  =-G\left(  \left(  D^{2}w\right)  ^{-1}\right)
=-c.
\]
Now given $\lambda_{\min}\left(  D^{2}u\right)  $ attains its local minimum,
we have $\lambda_{\max}\left(  D^{2}w\right)  =\lambda_{\min}^{-1}\left(
D^{2}u\right)  $ reaching its local maximum, and say, $\lambda_{\max}\left(
D^{2}w\right)  =w_{11}$ at the local maximum point. By the strong maximum principle,
subsolution $w_{11}\equiv l\equiv$ $\lambda_{\max}\left(  D^{2}w\right)  .$
The gradient graphs split off a line%
\[
\left(  x_{1},x^{\prime},u_{1}(  x)  ,D^{\prime}u(  x)
\right )  =\left(  ly_{1},D^{\prime}w(  y^{\prime})  ,y_{1}%
,y^{\prime}\right)  .
\]
Accordingly, $u_{1}\left(  x\right)  =l^{-1}x_{1},$ hence $u_{11}\equiv
\lambda_{\min}\left(  D^{2}u\right)  $ is constant.

The subtlety lies in the case $\lambda_{\min}\left(  D^{2}u\right)  =0,$ where
the Legendre transformation is not possible. The strong minimum principle for
$\lambda_{\min}$ was achieved in \cite{CGM} by realizing its superharmonicity in a
hidden smooth form of $\bigtriangleup_{G}\lambda_{\min}\leq B\left(  x\right)
\cdot D\lambda_{\min}.$ Moreover, the Hessian $D^{2}u$ may no longer split, that
is, the the minimum-eigendirection may not be fixed, as counterexamples by Korevaar-Lewis
in \cite[p.31]{KL} and also by Sz\'{e}kelyhidi-Weinkove in \cite[p.6530]{SW1} indicate. A quantitative version of the constant rank theorem was discovered in \cite{SW2}.

The lower constant rank results for convex solutions in Theorem \ref{CrankSlag} and
Theorem \ref{Cranksigma2} are contained in \cite[p.1772]{CGM}, as the two corresponding equations
are inverse-convex then. The inverse convexity was also employed in deriving the upper, or equivalently lower, constant rank result for solutions of \eqref{EsLag} with the eigenvalues of the Hessian between -1 and 1  by Bhattacharya-Shankar in \cite[p.18]{BS}.
The connection to the inverse-convexity of \eqref{EsLag} with saddle solutions 
in Theorem \ref{CrankSlag} and \cite{BS} is via the rotation developed in \cite{Y1}, once recognized as 
the Legendre transformation up to translation and scaling.

Note that the lower bound in Theorem \ref{CrankSlag} with $\Theta=\pi/2$ and $n=3$ is
exactly the one $\lambda_{\min}>\tan\left(  -\pi/6\right)  =-1/\sqrt{3}=-m$ in
Theorem \ref{Cranksigma2} with $n=3.$ This is because here the algebraic form of
\eqref{EsLag} is \eqref{Esigma2}. In this instance, the new rotated equation
$\sum_{i=1}^{n}\arctan\bar{\lambda}_{i}=-\pi/2$ is equivalent to $\sigma
_{2}\left(  D^{2}\bar{u}\right)  =1$ on the negative branch $\bar{\lambda}%
_{1}+\bar{\lambda}_{2}+\bar{\lambda}_{3}<0.$ In terms of $\mu_{i}=\left(
\lambda_{i}+m\right)  ^{-1},$ equation $\sigma_{2}\left(  \bar{\lambda
}\right)  =1$ becomes $\Lambda\left(  \mu\right)  =\sigma_{2}\left(
\mu\right)  /\sigma_{1}\left(  \mu\right)  =1/\left(  2m\right)  ,$ which is
concave. In other words, $D^{2}u+mI$ satisfies an inverse-convex equation,
with abused notation, $-\Lambda\left(  \left(  D^{2}u+mI\right)  ^{-1}\right)
=-1/\left(  2m\right)  .$ This particular three dimensional case inspired the
Legendre-Lewy transformation, $(u+m\left\vert x\right\vert ^{2}/2)^{\ast}$ with
$m=\sqrt{2/\left[  n\left(  n-1\right)  \right]  }$, used to derive the Liouville type
result in \cite{CY}, and the constant rank result Theorem \ref{Cranksigma2} for almost convex
solutions of \eqref{Esigma2} with $D^{2}u>-mI$ in general dimensions. The
rigidity result for general semiconvex entire solutions to \eqref{Esigma2}
needed more effort in \cite{SY}.

The above rotation used in proving Theorem \ref{CrankSlag} turns an entire solution to
\eqref{EsLag} with the specific lower or upper Hessian bound into a new entire
one with bounded Hessian on a concave/convex level set, Evans-Krylov's theorem
then leads to the rigidity result Theorem \ref{rigidity}. Coupled with the quadratic
rigidity of 2-homogeneous analytic solutions to a uniformly elliptic Hessian
equation in $\mathbb{R}^{4}\setminus \{  0\}  $ in \cite{NV2}, the argument 
in \cite[p.124--125]{Y1} also shows the Liouville type result for entire solutions to
\eqref{EsLag} with arbitrary lower or upper Hessian bound now in four
dimensions. A Bernstein type result, namely a quadratic rigidity for entire
solutions to \eqref{EsLag} with supercritical phase $\left\vert \Theta
\right\vert >\left(  n-2\right)  \pi/2$ was derived in \cite{Y2}. Precious entire
solutions $W\left(  x\right)  =\left(  x_{1}^{2}+x_{2}^{2}-1\right)  e^{x_{3}%
}+e^{-x_{3}}/4$ and $L\left(  x\right)  =x_{1}^{2}x_{2}-\frac{2}{3}x_{2}%
^{3}-x_{2}x_{3}$ to \eqref{EsLag} with critical $\Theta=\pi/2$ or $\sigma
_{2}\left(  D^{2}W\right)  =1$ and \eqref{EsLag} with subcritical $\Theta=0$
or $\bigtriangleup L=\det D^{2}L$ were constructed by Warren in \cite{W} and by C.-Y. Li in \cite{Lc}, respectively.

Special Lagrangian equation \eqref{EsLag} is the potential equation for
minimal gradient graph or special
Lagrangian graph $\left(  x,Du\left(  x\right)  \right)  \subset\mathbb{R}%
^{n}\times\mathbb{R}^{n}.$ While there are singular $C^{1,\alpha}$ or
Lipschitz viscosity solutions to \eqref{EsLag} with subcritical phase
\cite{NV1, WdY, MS}, regularity for $C^{1,1}$ solutions 
in five dimensions and higher is
an open question, which is equivalent to the quadratic rigidity of
2-homogeneous solutions to \eqref{EsLag} with subcritical phase. Nontrivial
2-homogeneous solutions were found to a saddle uniformly elliptic Hessian
equation in five dimensions \cite{NTV}. Should a nontrivial 2-homogeneous solution
exist to \eqref{EsLag} with subcritical phase, say for example $\Theta=0,$ the
lower bound for the Hessian must be less than or equal to $-\tan\left(
\pi/5\right)  .$ Otherwise, a splitting of the Hessian by Corollary \ref{2homogeneous} implies the triviality of
the supposed example. It would be marvelous that the lower bound of the Hessian is
exactly the $-\tan\left(  \pi/5\right)  $ in Theorem \ref{CrankSlag}.

In the appendix of this paper, we present another proof of a general constant
rank theorem for inverse-convex elliptic Hessian equations, in viscosity way,
besides the smooth \cite{CGM, BG} and approximation \cite{SW1,SW2} ones. 
The argument is to
demonstrate the next smallest nonidentically vanishing eigenvalue is a
supersolution in viscosity sense. In particular, the demonstration of the
constancy of the smallest eigenvalue is simple. After our paper was finished, we found a similar viscosity approach for the constant rank theorem had appeared in the work of Bryan-Ivaki-Scheuer \cite{BIS}.

\section{Proof of Theorem \ref{CrankSlag} and Theorem \ref{rigidity}}

\begin{proof}
[Proof of Theorem \ref{CrankSlag}.]We show the assertions for $\lambda_{\min}.$ These
assertions applied to the solution $v=-u$ of the
equation $\sum_{i=1}^{n}\arctan\lambda_{i}\left(  D^{2}v\right)  =-\Theta$ yield
the corresponding conclusions for $\lambda_{\max}\left(  D^{2}u\right)  .$ We
consider three cases: $\Theta\leq\left(  2-n\right)  \pi/2,$ $\Theta
\in(\left(  2-n\right)  \pi/2,\pi],$ and $\Theta\in\left(  \pi,n\pi/2\right)
.$

Case $\Theta\leq\left(  2-n\right)  \pi/2:$ By \cite[Lemma 2.1]{Y2}, the level set
of equation $\ F\left(  D^{2}u\right)  =\sum_{i=1}^{n}\arctan\lambda_{i}=\Theta$
is concave as a graph along the gradient $\nabla_{\lambda}\sum_{i=1}^{n}\arctan\lambda
_{i},$ then $\bigtriangleup_{F}u_{ee}=F_{ij}\partial_{ij}u_{ee}\leq0.$
Therefore, a strong minimum principle holds for $u_{ee}.$ If $\lambda_{1}$
reaches its local minimum ($>-\infty$), then after change of $x$-coordinates if
necessary, $u_{11}$ reaches a local minimum of the same value of $\lambda
_{1},$ at the same point, so $u_{11}\equiv\lambda_{1}$ is constant.

Case $\Theta\in\left(  \pi,n\pi/2\right)  :$ The hypothesis $\arctan
\lambda_{1}\geq\frac{\Theta-\pi}{n}$ implies $\lambda_{n}\geq\cdots\geq
\lambda_{1}>0,$ so $u$ is strongly convex. Let $u^{\ast}$ be the Legendre
transform of $u.$ The eigenvalues $\mu_{i}$ of $D^{2}u^{\ast}=\left(
D^{2}u\right)  ^{-1}$ satisfy $\mu_{i}=\lambda_{i}^{-1}>0$ and (1.1) becomes%
\[
\sum_{i=1}^{n}\arctan\mu_{i}=n\frac{\pi}{2}-\Theta.
\]
Since $\mu_{i}>0,$ this elliptic equation for $u^{\ast}$ is concave, so we
have a strong maximum principle for pure second derivatives of $u^{\ast}.$ In turn,
this also yields a strong minimum principle for pure second derivatives of $u.$ The
conclusion of Theorem \ref{CrankSlag} follows.

Case $\Theta\in(\left(  2-n\right)  \pi/2,\pi]:$ Let $\alpha=\left(
\Theta-\pi\right)  /n\in(-\pi/2,0]$ and $\beta=\frac{\pi}{2}+\alpha\in
(0,\pi/2].$ First we assume that the attained local minimum is strictly larger than the assumed lower bound, that is%
\[
\frac{\pi}{2}>\theta_{n}\geq\cdots\geq\theta_{1}>\alpha.
\]
where $\theta_i = \arctan \lambda_i. $
As in \cite[p.124--125]{Y1}, we are able to represent the Lagrangian graph $(
x,y)  =(  x,Du(  x)  )  \subset\mathbb{R}%
^{n}\times\mathbb{R}^{n}$ with new anti-clockwise $\beta$-rotated coordinate
axes $\partial_{\bar{x}}+\sqrt{-1}\partial_{\bar{y}}=\left(  c+\sqrt
{-1}s\right)  \left(  \partial_{x}+i\partial_{y}\right),  $
where $c=\cos\beta$ and $s=\sin\beta,$%
\begin{equation}
\left(  cx+sDu\left(  x\right)  ,-sx+cDu\left(  x\right)  \right)  =\left(
\bar{x},D\bar{u}\left(  \bar{x}\right)  \right)  \label{b-roration}%
\end{equation}
such that%
$$
\bar{\theta}_{i}=\theta_{i}-\beta=\theta_{i}-\frac{\pi}{2}-\alpha\in\left(
-\pi/2,\pi/2\right) $$ and 
\begin{equation}
\bar{F}\left(  D^{2}\bar{u}\right)  =\sum_{i=1}^{n}\arctan\bar{\lambda}%
_{i}=\left(  2-n\right)  \frac{\pi}{2}, \label{EsLagC}%
\end{equation}
where $\bar{\theta}_{i}=\arctan\bar{\lambda}_{i}$ and $\bar{\lambda}%
_{i}^{\prime}s$ are the eigenvalues of the Hessian $D^{2}\bar{u}$ of the new
potential $\bar{u}.$

Again by \cite[Lemma 2.1]{Y2}, the level set of the equation \eqref{EsLagC} is
concave as a graph along the gradient $\nabla_{\bar{\lambda}}\sum_{i=1}^{n}\arctan
\bar{\lambda}_{i},$ then $\bigtriangleup_{\bar{F}}\bar{u}_{\bar{e}\bar{e}%
}=\bar{F}_{ij}\partial_{ij}\bar{u}_{\bar{e}\bar{e}}\leq0.$ Thus a strong
minimum principle holds for $\bar{u}_{\bar{e}\bar{e}}.$

Now $\lambda_{\min}=\lambda_{1}$ reaches its local minimum, which is strictly
larger than $\tan\left(  \alpha-\beta\right)  =-\infty.$ Correspondingly,
$\bar{\theta}_{\min}=\bar{\theta}_{1}=\theta_{1}-\beta$ reaches its local
minimum. By the strong minimum principle, $\bar{u}_{\bar{1}\bar{1}}\equiv
\tan\bar{\delta}\equiv\bar{\lambda}_{1},$ under change of $\bar x$-coordinates if
necessary. Then the new Lagrangian graph (\ref{b-roration}) becomes%
\begin{gather*}
\left(  cx_{1}+su_{1}\left(  x\right)  ,cx^{\prime}+sD^{\prime}u\left(
x\right)  ,-sx_{1}+cu_{1}\left(  x\right)  ,-sx^{\prime}+cD^{\prime}u\left(
x\right)  \right) \\
=\left(  \bar{x}_{1},\bar{x}^{\prime},\tan\bar{\delta}\ \bar{x}_{1},D^{\prime
}\bar{u}\left(  \bar{x}^{\prime}\right)  \right)
\end{gather*}
Accordingly, $u_{1}\left(  x\right)  =\tan\left(  \bar{\delta}+\beta\right)
x_{1},$ hence $u_{11}\equiv\lambda_{1}$ is constant.

\smallskip
\noindent
\textbf{Remark.} The Legendre transform $u^{\ast}$ of strongly convex $u$ in
case $\Theta\in\left(  \pi,n\pi/2\right)  $ is obtained by performing the
rotation \eqref{b-roration} with $\beta=\pi/2$ followed by a reflection. In
fact, $\bar{u}=-u^{\ast}$ satisfies $\sum_{i=1}^{n}\arctan\bar{\lambda}%
_{i}\left(  D^{2}\bar{u}\right)  =\Theta-n\pi/2$ with $\bar{\lambda}%
_{i}\left(  D^{2}\bar{u}\right)<0.$ This equation's level set is concave on the part with all
$\bar{\lambda}_{i}(D^{2}\bar{u})  <0 $ as a graph along the
gradient $\nabla_{\bar{\lambda}}\sum_{i=1}^{n}\arctan\bar{\lambda}_{i}\ $. The
above argument for case $\Theta\in(\left(  2-n\right)  \pi/2,\pi]$ also leads
to the conclusions in the case $\Theta\in\left(  \pi,n\pi/2\right)  .$

\smallskip 
Next we consider the borderline situation, the attained local minimum $\arctan \lambda_1 = \alpha=\left(  \Theta-\pi\right)  /n.$ The previous new
representation by rotation is no longer valid, because $\bar{\theta}%
_{1}=-\pi/2$, or equivalently $\bar{\lambda}_{1}=-\infty.$ Whenever this rotation is possible, that is, when $\arctan\lambda
_{1}>\alpha,$ we have%
\begin{gather*}
\bar{\lambda}_{i}=\tan\left(  \theta_{i}-\beta\right)  =-a-\frac{1+a^{2}%
}{\lambda_{i}-a}\ \ \text{for }\tan\beta=-\tan^{-1}\alpha=-a^{-1}\\
\text{or\ \ \ \ \ }D^{2}\bar{u}=-aI-\left(  1+a^{2}\right)  \left(
D^{2}u-aI\right)  ^{-1}.
\end{gather*}
By the above mentioned concavity of the level set of $\bar{F}\left(  D^{2}%
\bar{u}\right)  =\left(  2-n\right)  \pi/2,$ which is preserved
under translation and scaling, the level set of%
\[
\tilde{F}\left(  A\right)  \overset{\text{def}}{=}\bar{F}\left(  -aI-\left(
1+a^{2}\right)  A\right)  =\left(  2-n\right)  \frac{\pi}{2}%
\]
is also concave as a graph along $\nabla_{A}\tilde{F}.$ Hence the 
condition for constant rank in [CGM, (2.2)] with $A=D^{2}u-aI$ and $X=D^{2}u_{e}$ holds.
Effectively, the condition \eqref{inverseconvexitycondition} for constant rank with $F\left(  A\right)  $ and
$u\left(  x\right)  $ replaced by $\tilde{F}\left(  A^{-1}\right)  $ and
$u\left(  x\right)  -a\left\vert x\right\vert ^{2}/2$ respectively holds.
Therefore, by \cite[Theorem 1.1]{CGM} or Theorem 4.1, $D^{2}u-aI$ has constant rank,
and in turn, $\lambda_{\min}=\lambda_{1}\equiv\tan\alpha=a.$
\end{proof}

The Hessian splitting conclusion of Theorem \ref{CrankSlag} is the main tool used to prove Corollary \ref{2homogeneous}. The idea is to split off a quadratic function in one variable from $u$ and then observe that the remaining part of the solution also satisfies the hypotheses of Theorem \ref{CrankSlag}. 

\begin{proof}[Proof of Corollary \ref{2homogeneous}]
We prove the quadratic rigidity under the assumption $ \arctan \lambda_{\min}  >  ( \Theta - \pi ) /n$. This result applied to the solution $v =-u $ of the equation $\sum_{i=1}^n \arctan \lambda_i (D^2 v ) = - \Theta$ yields the same rigidity when $ \arctan \lambda_{\max} (D^2 u ) < ( \Theta + \pi ) /n.$

Since $u$ is homogeneous of degree 2, $D^2 u $ is homogeneous of degree 0, so $\arctan \lambda_{\min} $ achieves its minimum value, $\arctan \lambda_{\min} \geq \arctan m_1 > ( \Theta - \pi ) /n$. By Theorem \ref{CrankSlag}, $\lambda_{\min} $ is constant and the Hessian $D^2 u $ splits such that, after a change of coordinates if necessary, $u_{11} \equiv m_1.$ 
Then, 
\begin{equation*}
u = \frac 12 m_1 x_1 ^2 + \tilde u ( x_2 , \dots, x_n ). 
\end{equation*} 
The function $\tilde u$ satisfies the equation 
\begin{equation} 
\sum_{i=1} ^ {n-1} \arctan \lambda_i ( D^2 \tilde u ) = \Theta - \arctan m_1 ,
\end{equation}
and $\arctan \lambda_i ( D^2 \tilde u ) \geq \arctan m_1  > ( \Theta - \pi ) /n $ for all $1 \leq i \leq n-1$. 

By Theorem \ref{CrankSlag} in dimension $n-1$, if $\arctan \lambda_{\min} ( D^2 \tilde u ) > ( \tilde \Theta  - \pi ) / ( n-1 ) , $ with $\tilde \Theta = \Theta - \arctan m_1$, then $\lambda_{\min } ( D^2 \tilde u) $ is constant and the Hessian $D^2 \tilde u$ splits. Indeed, by the assumption $\arctan m_1 >  ( \Theta - \pi ) / n$, or equivalently 
$$ \frac{ \Theta - \pi }{ n} > \frac{ \Theta - \arctan m_1 - \pi }{ n-1}, 
$$
we have 
$$ \lambda_{\min} ( D^2 \tilde u ) \geq \arctan m_1  > \frac{\Theta - \pi }{ n } > \frac{ \tilde \Theta - \pi }{n-1}.$$

Inductively, we conclude that $u$ is a quadratic polynomial. \end{proof} 

\noindent \textbf{Remark.} The assumption that $u$ is homogeneous degree 2 is used to ensure that every $\lambda_i$ attains an interior local minimum. In fact, if $u$ is a smooth solution of \eqref{EsLag} with $\arctan \lambda_{1} \geq ( \Theta - \pi ) / n , $ and every $\lambda_i$ for $1 \leq i \leq k$, attains an interior local minimum, then $\lambda_i$ is constant for $1 \leq i \leq k$. If all the local minima $\arctan \lambda_i > ( \Theta - \pi ) / n $, then the Hessian splits as $\lambda_i \equiv u_{ii} \equiv const.$ for $1 \leq i \leq k.$ 

We now proceed with the proof of Theorem \ref{rigidity}, which is an application of the Evans-Krylov theorem to an equation obtained by a rotation of the form \eqref{b-roration}.
\begin{proof}
[Proof of Theorem \ref{rigidity}.] We show the quadratic rigidity under the condition
$\arctan\lambda_{\min}\geq\gamma>\left(  \Theta-\pi\right)  /n.$ The same
rigidity for the solution $v=-u$ to the equation $\sum\arctan\lambda_{i}\left(
D^{2}v\right)  =-\Theta$ under the condition $\arctan\lambda_{\min}\left(
D^{2}v\right)  \geq-\gamma>\left(  -\Theta-\pi\right)  /n$ or equivalently $\arctan
\lambda_{\max}\left(  D^{2}u\right)  \leq\gamma<\left(  \Theta+\pi\right)  /n$
yields the corresponding conclusion under the $\lambda_{\max}$ condition.

The result is known if $\Theta\geq\pi$ as then $\lambda_{\min}\geq0$
\cite[Theorem 1.1]{Y1}, or if $\Theta\geq\pi-n\pi/6-\varepsilon^{\prime}\left(
n\right)  $ as then $\lambda_{\min}\geq-1/\sqrt{3}-\varepsilon\left(
n\right)  $ \cite[p.924]{WY} \cite[p.22]{LLY}, or if $\Theta<\left(  2-n\right)  \pi/2$
\cite[Theorem 1.1]{Y2}. We consider more than the remaining range, $\Theta\in
\lbrack\left(  2-n\right)  \pi/2,\pi),$ where our unified argument is in the
spirit of \cite{Y2}.

First, each Lagrangian angle $\theta_{i}=\arctan\lambda_{i}$ satisfies%
\[
\frac{\pi}{2}>\theta_{i}\geq\gamma>\frac{\Theta-\pi}{n}=\alpha\in\left[
-\frac{\pi}{2},0\right)  ,
\]
where we may and do assume $\gamma\leq0.$ In order to include the negative critical phase, set $\delta = ( \gamma - \alpha ) /2 > 0.$ 
Then the new representation (\ref{b-roration}) with $\beta$ replaced by $\tilde \beta =\pi/2 + \alpha + \delta  \in ( \delta , \pi/2 ) $ is valid, at least locally, such that%
$$
\bar{\theta}_{i}   =\theta_{i}-\frac{\pi}{2}-\alpha - \delta \in \left(
-\frac{\pi}{2}+ \delta ,\frac \pi 2 - \delta  \right) $$
and 
\begin{equation} 
\bar{F}\left(  D^{2}\bar{u}\right)     =\sum_{i=1}^{n}\arctan\bar{\lambda
}_{i}=\left(  2-n\right)  \frac{\pi}{2} - n \delta . \label{EsLagC-}%
\end{equation}
The new representation (\ref{b-roration}) of the original entire special
Lagrangian graph $\left(  x,Du\left(  x\right)  \right)  $ is in fact valid
for all $\bar{x}\in\mathbb{R}^{n}.$ This is because of the following distance
expansion property%
\begin{align*}
\left\vert \bar{x}^{2}-\bar{x}^{1}\right\vert  &  =\tilde{c}^{2}\left\vert
\left(  x^{2}-x^{1}\right)  +\tilde{t}\left[  Du\left(  x^{2}\right)
-Du\left(  x^{1}\right)  \right]  \right\vert ^{2}\\
&  =\tilde{c}^{2}\left\vert \left(  1-\tilde{t}m\right)  \left(  x^{2}%
-x^{1}\right)  +\tilde{t}\left[  Du\left(  x^{2}\right)  +mx^{2}-Du\left(
x^{1}\right)  -mx^{1}\right]  \right\vert ^{2}\\
&  \geq\tilde{c}^{2}\left\vert \left(  1-\tilde{t}m\right)  \left(
x^{2}-x^{1}\right)  \right\vert ^{2}+\tilde{t}^{2}\left\vert Du\left(
x^{2}\right)  +mx^{2}-Du\left(  x^{1}\right)  -mx^{1}\right\vert ^{2}\\
&  \geq\tilde{c}^{2}\left\vert \left(  1-\frac{\tan\left\vert \gamma
\right\vert }{\tan | \gamma + \delta |}\right)  \left(  x^{2}-x^{1}\right)  \right\vert ^{2},
\end{align*}
where $\left(  \tilde{c},\tilde{s}\right)  =\left(  \cos\left(  \frac{\pi}%
{2}+\alpha+\delta\right)  ,\sin\left(  \frac{\pi}{2}+\alpha+\delta\right)
\right)  ,$ $\tilde{t}=\tilde{s}/\tilde{c},$ $m=\tan\left\vert \gamma
\right\vert ,$ and convexity of $u\left(  x\right)  +m\left\vert x\right\vert
^{2}/2$ is used in the first inequality.

Finally, we have an entire solution $\bar{u}$ with bounded Hessian on the concave
level set \cite[Lemma 2.1]{Y2} of the now uniformly elliptic equation (\ref{EsLagC-}).
Scaling Evans-Krylov's H\"{o}lder seminorm estimate for the Hessian $\ D^{2}%
\bar{u}$ and then taking limit yields the constancy of \ $D^{2}\bar{u}.$ Thus
$\left(  \bar{x},D\bar{u}\left(  \bar{x}\right)  \right)  $, or equivalently $\left(
x,Du\left(  x\right)  \right),  $ is a plane, and in turn, the original
potential $u$ is a quadratic polynomial.
\end{proof}

\section{Proof of Theorem \ref{Cranksigma2}}

\begin{proof}
[Proof of Theorem \ref{Cranksigma2}.]First we assume the attained local minimum is strictly larger than the assumed lower bound,
 $\lambda_{\min}>-\sqrt{2/\left[  n\left(  n-1\right)  \right]
}\overset{\text{def}}{=}m.$ Let $w\left(  y\right)  $ be the Legendre-Lewy
transform, $(u\left(  x\right)  +m\left\vert x\right\vert ^{2}/2)^\ast.$ In terms
of gradient graphs in $\mathbb{R}^{n}\times\mathbb{R}^{n},$%
\begin{equation}
\left(  x,Du\left(  x\right)  +mx\right)  =\left(  Dw\left(  y\right)
,y\right)  . \label{LLLgraph}%
\end{equation}
A tangent plane of $\left(  x,Du\left(  x\right)  +mx\right)  $ is formed by
tangent vectors $\left(  e,\left(  D^{2}u\left(  x\right)  +mI\right)
e\right)  $ with $e\in\mathbb{R}^{n};$ correspondingly, tangent vectors
$\left(  \left(  D^{2}u\left(  x\right)  +mI\right)  ^{-1}e,e\right)  $ form
the tangent plane of $\left(  Dw\left(  y\right)  ,y\right)  $ at $y=Du\left(
x\right)  +mx.$ Thus%
$$
D^{2}w\left(  y\right)     =\left(  D^{2}u\left(  x\right)  +mI\right)
^{-1}, $$ and in terms of eigenvalues,
$$\mu_{i}    =\left(  \lambda_{i}+m\right)  ^{-1}>0.
$$
It follows that $\mu$ satisfies%
$$
1=\sum_{1\leq i<j\leq n}(  \mu_{i}^{-1}-m)  (  \mu_{j}%
^{-1}-m)  =\frac{\sigma_{n-2}\left(  \mu\right)  }{\sigma_{n}\left(
\mu\right)  }-\left(  n-1\right)  m\frac{\sigma_{n-1}\left(  \mu\right)
}{\sigma_{n}\left(  \mu\right)  }+1$$
or \begin{equation} \Lambda\left(  D^{2}w\right)  =\Lambda\left(  \mu\right)
\overset{\text{def}}{=}\frac{\sigma_{n-1}\left(  \mu\right)  }{\sigma
_{n-2}\left(  \mu\right)  }=\frac{1}{\left(  n-1\right)  m}, \label{Eqc}%
\end{equation}
where $m=\sqrt{2/\left[  n\left(  n-1\right)  \right]  }$ is used. The
function $\Lambda\left(  \mu\right)  $ is concave and $\nabla_{\mu}%
\Lambda\left(  \mu\right)  >0$ componentwise [L, Theorem 15.18], therefore the
level set $\Lambda\left(  \mu\right)  =1/\left[  \left(  n-1\right)
m\right]  $ is convex as a graph along $\nabla_{\mu}\Lambda.$ Hence%
\[
\bigtriangleup_{\Lambda}w_{ee}=\Lambda_{ij}\partial_{ij}w_{ee}\geq0.
\]
Consequently, a strong maximum principle holds for $w_{ee}.$

Now $\mu_{\max}=\left(  \lambda_{\min}+m\right)  ^{-1}$ reaches its local
maximum. By the strong maximum principle, $w_{11}\equiv l\equiv\mu_{\max}$, under change of $y$-coordinates if necessary. Then the gradient graph
(\ref{LLLgraph}) reads%
\[
\left(  x_{1},x^{\prime},u_{1}(  x)  +mx_{1},D^{\prime}u(
x)  +mx^{\prime}\right)  =\left(  ly_{1},D^{\prime}w(  y^{\prime
})  ,y_{1},y^{\prime}\right)  .
\]
Accordingly, $u_{1}(  x)  =\left(  l^{-1}-m\right)  x_{1},$ hence
$u_{11}\equiv\lambda_{\min}$ is constant.

Next we consider the borderline situation, the attained local minimum of $\lambda _ { \min } = m$. The previous
Legendre-Lewy transformation is no longer valid. Whenever the transformation
is possible, that is, when $\lambda_{\min}(  D^{2}u)  >m,$ we have
(\ref{Eqc}). By the above proved convexity of the level set of $\Lambda(
D^{2}w)  =1/\left[  \left(  n-1\right)  m\right]  $ as a graph along
$\nabla_{D^{2}w}\Lambda,$ the level set of
\[
\tilde{F}(  D^{2}u)  =-\Lambda\left(  \left(  D^{2}u+mI\right)
^{-1}\right)  =-1/\left[  \left(  n-1\right)  m\right]
\]
is concave as a graph along $\nabla_{D^{2}u}\tilde{F}\left(  D^{2}u\right).$ As in the
proof of Theorem \ref{CrankSlag}, the condition for constant rank in \cite[(2.2)]{CGM} with
$A=D^{2}u+mI$ and $X=D^{2}u_{e}$ holds. Effectively, the condition \eqref{inverseconvexitycondition} for constant rank with $F\left(  A\right)  $ and $u\left(  x\right)  $ replaced
by $\tilde{F}\left(  A^{-1}\right)  $ and $u\left(  x\right)  +m\left\vert
x\right\vert ^{2}/2$ respectively holds. Therefore, by \cite[Theorem 1.1]{CGM} or
Theorem 4.1, $D^{2}u+mI$ has constant rank, and in turn, $\lambda_{\min
}=\lambda_{1}$ is constant.
\end{proof}

\section{Appendix} \label{sect4} 
In this section we present a viscosity approach to prove the following constant rank theorem for nonlinear elliptic Hessian equations. As mentioned in the introduction, a similar viscosity proof appeared in \cite{BIS}. For completeness, we include our proof here. For convenience, we state and prove the theorem under the assumption that $F(D^2u ) = f ( \lambda_1 , \dots , \lambda_n )$ where $f$ is a symmetric function of its arguments, but the proof can be adapted to accommodate a general Hessian equation. 
\begin{theorem} \label{constantrank}
Assume $u \in C^3$ is a convex solution of the elliptic equation $F(D^2u) =0 $ where $F(M^{-1})$ is a convex function on $\sym^+(n)$. Then $D^2 u $ has constant rank. \end{theorem}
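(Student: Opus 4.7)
The strategy is to prove that each eigenvalue of $D^2u$ is either identically zero or everywhere positive on the connected domain, whence (since the eigenvalues are ordered) $D^2u$ has constant rank. I will argue by induction on $\ell$: assuming $\lambda_1 \equiv \cdots \equiv \lambda_{\ell-1} \equiv 0$, I want to show that $\tau_\ell(x) := \sum_{i=1}^\ell \lambda_i(D^2u(x))$, which equals $\lambda_\ell$ under the inductive hypothesis, is a viscosity supersolution of the linearized operator $L_F := F_{ij}(D^2u)\partial_{ij}$. Since $\tau_\ell \geq 0$ by convexity of $u$, and $L_F$ is linear elliptic with continuous coefficients ($u\in C^3$, $F$ smooth and elliptic), the strong minimum principle for viscosity supersolutions will then give the desired dichotomy and close the induction.

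To verify the viscosity supersolution property I use the variational characterization
\[
\tau_\ell(x) \;=\; \min_{P\in \mathcal{P}_\ell}\operatorname{tr}\bigl(P\,D^2u(x)\bigr)
\]
over rank-$\ell$ orthogonal projections $\mathcal{P}_\ell$. Given a $C^2$ test function $\phi$ with $\phi \leq \tau_\ell$ and $\phi(x_1)=\tau_\ell(x_1)$, I select the minimizing $P_0$ at $x_1$ with orthonormal frame $\{v_1,\ldots,v_\ell\}$ of bottom eigendirections of $D^2u(x_1)$, which lie in $\ker D^2u(x_1)$ whenever $\tau_\ell(x_1)=0$. Then $\phi(x) \leq \sum_{j=1}^\ell u_{v_jv_j}(x)$ with equality at $x_1$, so the $C^2$ function $\sum_j u_{v_jv_j} - \phi$ attains a local minimum at $x_1$; by the second derivative test together with the positive definiteness of $(F_{ij}(D^2u(x_1)))$ (ellipticity),
\[
L_F\phi(x_1) \;\leq\; \sum_{j=1}^\ell L_F u_{v_jv_j}(x_1) \;=\; -\sum_{j=1}^\ell F_{ab,cd}(D^2u(x_1))\,(D^2 u_{v_j})_{ab}\,(D^2 u_{v_j})_{cd},
\]
the last equality being obtained by differentiating $F(D^2u)=0$ twice in each direction $v_j$.

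The main obstacle is showing that this right-hand side is nonpositive, equivalently that the quadratic form $F_{ab,cd}(D^2u(x_1))$ is nonnegative on the frame $X_j := D^2 u_{v_j}$ with $v_j$ in the kernel. This is precisely the CGM structural inequality \cite[(2.2)]{CGM} and is obtained from the inverse-convexity hypothesis: regularizing $A := D^2u(x_1)$ to $A+\varepsilon I > 0$ and expanding $G(N):=F(N^{-1})$ to second order at $N=(A+\varepsilon I)^{-1}$ yields
\[
F_{ab,cd}(A+\varepsilon I)\,X_{ab}X_{cd} \;+\; 2\,F_{ab}(A+\varepsilon I)\bigl(X(A+\varepsilon I)^{-1}X\bigr)_{ab} \;\geq\; 0
\]
by convexity of $G$. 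The singular second term admits a controlled limit as $\varepsilon \to 0^+$ once one sums over $j$, using that $\{v_j\}$ is a critical frame for $\tau_\ell$ at $x_1$, which constrains the $X_j$ so as to cancel the blow-up of $(A+\varepsilon I)^{-1}$ in kernel directions. The base case $\ell=1$ is considerably simpler since only a single kernel direction appears, consistent with the authors' comment that the constancy of $\lambda_{\min}$ is easy; once the general inequality is in hand, the strong minimum principle for $L_F$ closes the induction and yields constant rank.
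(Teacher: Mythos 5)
Your skeleton is the same as the paper's Appendix argument (work with the partial sum of the lowest eigenvalues, show it is a viscosity supersolution of a linear elliptic operator, apply the strong minimum principle, induct on the number of vanished eigenvalues), but the step you yourself flag as the main obstacle is not a gap you can close as stated: it is false. Inverse-convexity gives, for $A>0$,
\begin{equation*}
F_{ab,cd}(A)\,X_{ab}X_{cd}+2F_{ab}(A)\bigl(XA^{-1}X\bigr)_{ab}\;\geq\;0,
\end{equation*}
which bounds $-F_{ab,cd}X_{ab}X_{cd}$ above by the nonnegative quantity $2F_{ab}(XA^{-1}X)_{ab}$; it never yields $-F_{ab,cd}X_{ab}X_{cd}\leq 0$. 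For the operators this theorem is actually applied to, $F$ is concave, so $-F_{ab,cd}X_{ab}X_{cd}\geq 0$ for every $X$, and your claimed nonpositivity would force $F_{ab,cd}X^j_{ab}X^j_{cd}=0$ for the matrices $X^j=D^2u_{v_j}$ — generically false. Nor does the $\varepsilon$-regularized second term vanish in the limit: even when the kernel-block entries of $X^j$ are zero (which the critical-frame structure does give for the off-diagonal ones), the limit is $2\sum_i\sum_{\lambda_c>0}F_{ii}\lambda_c^{-1}u_{i c v_j}^2$, and these mixed third derivatives coupling kernel and nonkernel directions need not vanish (the Korevaar--Lewis non-splitting examples live exactly on this phenomenon). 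In the paper this positive term is cancelled \emph{exactly} by the concavity terms $2\sum F_{ii}\,u_{icm}^2/(\lambda_m-\lambda_c)=-2\sum F_{ii}\,u_{icm}^2/\lambda_c$ (when $\lambda_m=0$) that arise from expanding the eigenvalue, or the sum $\sum_{m\leq s}\lambda_m$, to second order in the matrix. By majorizing $\tau_\ell$ with the linear functional $\operatorname{tr}(P_0 D^2u)$ and touching from above, you discard precisely these negative correction terms, and with them the only mechanism that makes the right-hand side nonpositive.

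Two further points would also need repair even if the cancellation were restored. First, in the induction step $\ell\geq2$ the terms $F_{ab,cd}u_{abm}u_{cdm}$ with an index in the already-vanished block cannot be absorbed into $B\cdot D\tau_\ell$; the paper handles them with the Cauchy--Schwarz device of Sz\'ekelyhidi--Weinkove, which inevitably produces a zeroth-order term, so the provable inequality is $\triangle_F\tau_\ell\leq C\tau_\ell+B\,|D\tau_\ell|$, not a pure supersolution of $L_F$ (this still suffices for the strong minimum principle since $\tau_\ell\geq0$). Second, your verification only addresses touching points with $\tau_\ell(x_1)=0$; the strong minimum principle needs the differential inequality on an open set, and at points with $\tau_\ell(x_1)>0$ the frame $\{v_j\}$ is not in the kernel and the positive term has no cancellation at all — this is exactly where the paper uses $\mu>0$ to hide it in $CM\mu$.
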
 
One of the obstacles in proving a constant rank theorem is that the eigenvalues of the Hessian matrix need not be differentiable. Previous works deal with this issue by computing with expressions in terms of the elementary symmetric polynomials of the eigenvalues, which are differentiable \cite{CGM, BG}, or by approximating the solution $u$ by a polynomial to ensure that points where the eigenvalues may not be differentiable are confined to an analytic variety \cite{SW1}. The current approach is to compute directly with the eigenvalues of the Hessian in order to derive differential inequalities for the eigenvalues which hold in the viscosity sense. In particular, this approach gives a simple argument for establishing the constancy of the smallest eigenvalue. 

We begin with a lemma establishing a formula for 1-sided derivatives of eigenvalues at matrices in $\sym(n)$ with repeated eigenvalues.  

\begin{lemma} \label{onesided}
Assume $M= \diag(\lambda_1, \dots, \lambda_n) $ with $\lambda_1 \leq \dots \leq \lambda_n $. Furthermore, assume $\lambda_{j-1} < \lambda_j = \dots = \lambda_k< \lambda_{k+1} $. Then, if $j \leq i \leq k$ and $A \in \sym(n)$, then
$$
\frac{d^+}{dt} \bigg \vert _{t=0} \lambda_i ( M+ t A ) = \lim_{t \to 0^+} \frac{ \lambda_i ( M + t A ) - \lambda_i }{t } 
$$
exists and 
\begin{equation*} \frac{d^+}{dt} \bigg \vert _{t=0} \lambda_i ( M+ t A ) = \lambda_{i-j+1} ( A|_{E} ),
\end{equation*} 
where $E= \spn(e_j, \dots, e_k )$ is the $\lambda_i$-eigenspace of $M$ and $A|_E$ is the $(k-j+1)\times (k-j+1)$ matrix with entries $(A_{rs})_{r,s=j}^k.$ 
\end{lemma}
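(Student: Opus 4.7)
The plan is to combine Rellich's analytic perturbation theorem with the spectral gap built into the hypothesis $\lambda_{j-1} < \lambda_j = \cdots = \lambda_k < \lambda_{k+1}$. Before anything else, perform an orthogonal change of basis inside $E = \spn(e_j, \ldots, e_k)$ that diagonalizes $A|_E$ as $\diag(\nu_1, \ldots, \nu_{k-j+1})$ with $\nu_s = \lambda_s(A|_E)$ for $1 \leq s \leq k - j + 1$. This change of basis leaves $M$ invariant because $M|_E = \lambda_j I_E$, and it does not alter any sorted eigenvalue of $M + tA$, so we may assume this normalization from the start.

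Since $t \mapsto M + tA$ is real analytic, Rellich's theorem furnishes real analytic functions $\mu_1(t), \ldots, \mu_n(t)$ on a neighborhood of $0$ whose values, counted with multiplicity, are the eigenvalues of $M + tA$, with $\mu_r(0) = \lambda_r$, together with real analytic orthonormal eigenvectors $v_r(t)$ satisfying $(M + tA) v_r(t) = \mu_r(t) v_r(t)$. Differentiating at $t = 0$, pairing with $e_s \in E$, and using $M e_s = \lambda_j e_s$ yields $\langle e_s, A e_r \rangle = \mu_r'(0) \delta_{rs}$ for $j \leq r, s \leq k$. Since $A|_E$ has already been diagonalized, this forces $\mu_r'(0) = \nu_{r-j+1}$ for $j \leq r \leq k$.

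Next, set $\delta = \min(\lambda_j - \lambda_{j-1}, \lambda_{k+1} - \lambda_j) > 0$. Weyl's inequality $|\mu_r(t) - \lambda_r| \leq |t|\,\|A\|$ implies that for $|t|$ small, the block branches $\mu_j(t), \ldots, \mu_k(t)$ lie within $\delta/3$ of $\lambda_j$ while every other branch stays outside a $\delta/2$-neighborhood of $\lambda_j$. Consequently, for $j \leq i \leq k$, the sorted eigenvalue $\lambda_i(M + tA)$ equals the $(i - j + 1)$-th smallest element of $\{\mu_j(t), \ldots, \mu_k(t)\}$. Taylor expanding $\mu_r(t) = \lambda_j + t \nu_{r-j+1} + O(t^2)$, for small $t > 0$ the internal block ordering matches the sorted order $\nu_1 \leq \cdots \leq \nu_{k-j+1}$, so the $(i - j + 1)$-th smallest value is $\lambda_j + t\,\lambda_{i-j+1}(A|_E) + O(t^2)$. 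Dividing by $t$ and sending $t \to 0^+$ yields the claimed formula.

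The main obstacle is the first-order perturbation identity $\mu_r'(0) = \nu_{r-j+1}$ at the multiple eigenvalue; Rellich's theorem is the textbook device that ensures analytic branches exist through the degenerate point, and the pre-diagonalization of $A|_E$ makes the derivative identification transparent. An alternative route bypasses Rellich altogether: after pre-diagonalizing, a block Schur-complement expansion using the order-one gap $\delta$ shows the off-block couplings $tA_{\text{in,out}}$ only perturb the block eigenvalues at order $O(t^2)$, yielding the same Taylor expansion directly.
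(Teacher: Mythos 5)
Your argument is correct, but your main route is heavier machinery than what the paper uses. Both proofs start identically, by rotating within $E$ to diagonalize $A|_E$ (legitimate since $M|_E=\lambda_j I$). After that, the paper argues directly: the block $(M+tA)|_E=\lambda_j I+tA|_E$ is then exactly diagonal with eigenvalues $\lambda_j+t\lambda_s(A|_E)$, and the off-block entries of $tA$ perturb these only at order $t^2$ (the Schur-complement/gap argument you sketch at the end as an ``alternative route''), which gives the one-sided derivative in two lines. You instead invoke Rellich's analytic perturbation theorem to produce analytic branches $\mu_r(t)$, identify their derivatives by degenerate first-order perturbation theory, and use Weyl's inequality plus the spectral gap to show that the sorted eigenvalue $\lambda_i(M+tA)$ is tracked by the block branches; this is correct and has the merit of being fully rigorous where the paper says ``it is clear,'' at the cost of importing analytic perturbation theory for what is really a finite gap estimate. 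Two small points of care: the identity you derive from differentiating is $\langle e_s, A\,v_r(0)\rangle=\mu_r'(0)\langle e_s, v_r(0)\rangle$ with $v_r(0)$ the Rellich eigenvectors, which need not equal $e_r$ when $A|_E$ has repeated eigenvalues; so what is forced is only that $\{\mu_r'(0)\}_{r=j}^k$ coincides as a multiset with the spectrum of $A|_E$, not the exact labeling $\mu_r'(0)=\nu_{r-j+1}$ — this suffices for your sorting step, but the claim as written is slightly overstated. Similarly, Weyl's inequality controls the sorted eigenvalues, so pinning each analytic branch near its starting eigenvalue needs the (easy) continuity remark that a branch cannot jump between the disjoint $\lvert t\rvert\,\lVert A\rVert$-neighborhoods of distinct eigenvalues of $M$. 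Neither issue is a real gap.
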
 

\begin{proof} We may choose an orthogonal basis for $E$ to diagonalize $A|_E$. Once $A|_E$ is diagonal, it is clear that for small nonzero $t$, the eigenvalues of the perturbed matrix $(M + t A)|_E$ satisfy 
\begin{equation*} 
\frac{ d^+ }{dt} \bigg \vert_{t=0} \lambda_{i-j+1} (( M+ tA)|_E) = \lambda_{i-j+1} ( A | _{E} ). 
\end{equation*} 
The entries of $tA$ outside of the block $tA|_E$ make contributions on the order of $t^2$ to $\lambda_i(M+tA)$, so this proves the lemma. 
\end{proof}

A consequence of Lemma \ref{onesided} is that if $M$ has repeated eigenvalues $\lambda_j = \dots = \lambda_k$, then $\lambda_{i}$ ($j \leq i \leq k)$ is differentiable in the direction $A$ at $M$ if and only if $\lambda_{i-j+1} (A|_E) = \lambda_{k-i+1}(A|_E)$. To see this, note that if 
$\frac{d}{dt}  \vert_{t=0} \lambda_i (M+ tA ) $ exists, then 
\begin{equation*} 
\begin{aligned} 
 \frac{d^+}{dt} \bigg \vert _{t=0} \lambda_i (M+ t A )&= \frac{d^-}{dt} \bigg \vert _{t=0} \lambda_i (M+ t A )\\ 
 &= \lim_{ t\to 0^-} \frac{ \lambda_i ( M + tA ) - \lambda_im }{t}\\
 &= - \lim_{t \to 0^+} \frac{ \lambda_i ( M -tA ) -\lambda_im }{ t}\\
 & =- \frac{d^+}{dt} \bigg \vert _{t=0} \lambda_i (M- t A )\\
 &= -\lambda_{i-j+1}  ( - A |_E) .\end{aligned} \end{equation*} 
Note that $- \lambda_{i-j+1} ( -A|_E) = \lambda_{k-i+1} (A|_E) .$\par 
In particular, $\lambda_j$ is differentiable in the direction $A$ if and only if $A|_E$ is a scalar matrix.

\begin{proof}[Proof of Theorem \ref{constantrank}]

Recall that $\lambda_j$ is a Lipschitz function, $C^1$ on the set $\{ p \: : \: \lambda_{j-1}  < \lambda_{j}  < \lambda_{j+1} \}.$ We assume that $0=\lambda_1(0)=\dots =\lambda_{n-r}(0),$ so that the rank of $D^2u$ reaches its minimum at $0$. 
All computations will be performed at a fixed point $p$, assuming that $D^2 u (p) $ is diagonal with $\lambda_j(p) = u_{jj}(p).$ Note that the assumption that $F$ is a symmetric function of the eigenvalues implies that $F_{ij} (D^2 u (p)) = f_{\lambda_i } \delta_{ij} .$ 
For a positive integer $t \leq n$, let $$I_t= \{ (i,j,k,l) \: : \: 1 \leq i,j,k,l \leq n, \text{ at least one of } i,j,k,l \leq t \}.$$

Step 1: Superharmonicity of $\lambda_1$. 

Assume first that $\lambda_1(p) < \lambda_2(p).$ 
At $p$, we have
 \begin{align}  \partial_i \lambda _1 & = \partial _i u_{11} \label{lambda1gradient}, \\ \partial_{ii} \lambda_1&= \partial_{ii} u_{11} + \sum_{ j > 1 } \frac{ 2}{ \lambda_1 - \lambda_j }( \partial_i u_{1j} )^2 \notag. \end{align}
Differentiating the equation $F(D^2u ) = 0 $, we obtain 
\begin{equation*}  0 = \partial_{11} F(D^2u ) = \sum_{i,j,k,l } F_{ij,kl} u_{ij1} u_{kl1} + \sum_{i,j} F_{ij} u_{ij11}, \end{equation*} 
so 
\begin{equation*} \begin{aligned} 
 \triangle _F u_{11} &= -\sum_{i,j,k,l} F_{ij,kl} u_{ij1} u_{kl1} \\
 & = - \sum_{i,j,k,l>1} F_{ij,kl} u_{ij1} u_{kl1} - \sum_{(i,j,k,l)\in I_1} F_{ij,kl} u_{ij1} u_{kl1}. \\
  \end{aligned} \end{equation*} 
  where $\triangle_F=\sum_{i,j} F_{ij} \partial_{ij} $.
  
  By the inverse-convexity we have
 \begin{equation}\label{inverseconvexitycondition}  - \sum_{i,j,k,l>1} F_{ij,kl} u_{ij1} u_{kl1} \leq 2 \sum_{i,j>1} F_{ii} \lambda_j^{-1}  u_{ij1}^2. \end{equation} 
 The sum of terms with at least one index equal to 1 is of the form $B \cdot D u_{11} $, and at $p$, $Du_{11} = D \lambda_1$ by \eqref{lambda1gradient}. 
 Therefore, at $p$
 $$\triangle _F u_{11} \leq 2 \sum_{i,j>1} F_{ii} \lambda_j^{-1} u_{ij1}^2 + B \cdot D \lambda_1. $$
 Then, 
 \begin{equation}\label{l1superharmonic} \begin{aligned} 
 \triangle_F \lambda_1 & = \triangle _F u_{11} +2\sum_{i} \sum_{j > 1  } F_{ii} \frac{ 1}{ \lambda_1 - \lambda_j } u_{ij1}^2 \\
& \leq B\cdot D \lambda_1 + 2 \sum_{i,j>1} F_{ii} \left ( \frac{1}{\lambda_j}+ \frac{ 1} { \lambda_1 - \lambda _j } \right )  u_{ij1}^2+ 2 \sum_{j>1} F_{11} \frac{ 1}{\lambda_1 - \lambda_j } u_{1j1}^2 \\
&\leq B \cdot D \lambda_1
 \end{aligned} \end{equation} 
 because $\frac{ 1}{ \lambda _j } - \frac1{ \lambda_j - \lambda_1 } \leq 0.$ 
 \par 
Now assume that 
 $\lambda_1(p) = \dots = \lambda_s(p) < \lambda_{s+1}(p).$
Suppose $\phi$ is a smooth function supporting $\lambda_1$ from below at $p$. 
 Then 
 \begin{equation*}  \phi \leq \lambda_1  \leq \frac 1 s \sum_{m\leq s}  \lambda_m  \end{equation*} 
  with equality at $p$. 
Observe that $\sum_{m\leq s}  \lambda_m$ is differentiable in a neighborhood of $p$. It follows from the assumption that $\phi$ supports $\lambda_1$ from below that $\lambda_1$ is also differentiable at $p$. Therefore Lemma \ref{onesided} implies that $u_{ije}(p)  =0$ if $i \neq j$ for $i,j \leq s$. 

We now compute 
$ \triangle _F \sum_{m\leq s}  \lambda_m.$ We have 
 \begin{align*} 
\triangle_F \sum_{m\leq s}  \lambda_m & = \triangle_F \sum_{m\leq s}  u_{mm} + 2 \sum_{i} \sum_{j>s}\sum_{m\leq s}  F_{ii} \frac{ u_{ijm}^2 } { \lambda_m - \lambda_j} \\
& = - \sum_{i,j,k,l} \sum_{m\leq s}  F_{ij,kl} u_{ijm} u_{klm} + 2 \sum_{i} \sum_{j>s}\sum_{m\leq s}  F_{ii} \frac{ u_{ijm}^2 } { \lambda_m - \lambda_j}
\\   & = - \sum_{i,j,k,l>s} \sum_{m\leq s}  F_{ij,kl} u_{ijm} u_{klm}  +  2 \sum_{i} \sum_{j>s}\sum_{m\leq s}  F_{ii} \frac{ u_{ijm}^2 } { \lambda_m - \lambda_j}  \\ & \quad- \sum_{(i,j,k,l) \in I_s }  \sum_{m\leq s}  F_{ij,kl} u_{ijm} u_{klm} 
\\& \leq 2 \sum_{i,j>s} \sum_{m \leq s} F_{ii} \frac{ u_{ijm}^2}{ \lambda_j } + 2  \sum_{i} \sum_{j>s}\sum_{m\leq s}  F_{ii} \frac{ u_{ijm}^2 } { \lambda_m - \lambda_j} \\ & \quad + \sum_{m \leq s } B_m \cdot D \lambda_m .
\end{align*} 
Lemma \ref{onesided} also implies that $D \lambda_i = D \lambda_j $ for $i,j \leq s$, so 
\begin{equation} \label{l1viscosity} \triangle _F \sum_{m\leq s}  \lambda_m  \leq   B \cdot D \lambda_1. \end{equation}  
Combining \eqref{l1superharmonic} with \eqref{l1viscosity} shows that $\lambda_1$ is a viscosity supersolution of the equation 
$ \triangle_F w - B \cdot D w =0, $
so by the strong minimum principle and the assumption that $\lambda_1 (0) =0$ it follows that $\lambda_1 \equiv 0.$ 
\par 
Step 2: Superharmonicity of $\lambda_{a+1} $ given $\lambda_1 = \dots \lambda_a =0.$ 

Assume that for some $a<r$, $\lambda_a \equiv 0.$ Fix $p \in \Omega$ and assume that $\lambda_{a+1} (p) = \dots = \lambda_{a+s} (p) < \lambda_{a+s+1}(p). $ Let $\mu = \sum_{a<m \leq a+ s } \lambda_{m}. $

If $\lambda_{a+1} (p) > 0$, then we compute 
 \begin{equation*}\begin{aligned} 
  \triangle_F \mu & =  \triangle_F\mu + 2\triangle_F \sum_{m \leq a } \lambda_m \\ 
& =  \triangle_F \sum_{a<m \leq a+s } u_{mm} + 2 \sum_i \sum_{j>a+s} \sum_{a<m \leq a+s } F_{ii} \frac{ u^2_{ijm}}{\lambda_{m} -\lambda_j } \\ 
& \quad + 2  \sum_i \sum_{j \leq a} \sum_{a < m \leq a +s } F_{ii} \frac{ u^2_{ijm}}{\lambda_{m} } + 2 \triangle_F \sum_{m\leq a } u_{mm} \\ 
& \quad+ 4 \sum_{i} \sum_{j > a} \sum_{m  \leq a} F_{ii} \frac{u^2_{ijm}}{ - \lambda_j}  \\ \end{aligned}\end{equation*} \begin{equation*}\begin{aligned}
& = - \sum_{i,j,k,l} \sum_{a < m \leq a +s } F_{ij,kl} u_{ijm } u_{klm} - 2 \sum_{i,j,k,l} \sum_{m \leq a } F_{ij,kl} u_{ijm} u_{klm}\\ 
& \quad + 2 \sum_i \sum_{j>a+s} \sum_{a < m \leq a +s } F_{ii} \frac{ u^2_{ijm}}{\lambda_{m} -\lambda_j }+ 2  \sum_i \sum_{j \leq a} \sum_{a < m \leq a +s } F_{ii} \frac{ u^2_{ijm}}{\lambda_{m}  } \\ 
& \quad+ 4 \sum_{i} \sum_{j>a} \sum_{m \leq a} F_{ii} \frac{u^2_{ijm}}{ - \lambda_j}  \\ 
\end{aligned}\end{equation*} 

By the inverse-convexity assumption, 
\begin{equation*}\begin{aligned} 
- \sum_{i,j,k,l} \sum_{a < m \leq a +s } F_{ij,kl} u_{ijm } u_{klm}&  \leq  2 \sum_{i,j>a+s} \sum_{a < m \leq a +s } F_{ii} \frac{1}{\lambda_j} u_{ijm}^2
\\ & \quad - \sum_{(i,j,k,l) \in I_{a+s} } \sum_{a < m \leq a +s }F_{ij,kl}  u_{ijm} u_{klm} 
\end{aligned} \end{equation*} 

and 

\begin{equation*} \begin{aligned} 
- 2 \sum_{i,j,k,l} \sum_{m \leq a } F_{ij,kl} u_{ijm} u_{klm} &\leq 4 \sum_{i,j> a+s }\sum_{m \leq a}  F_{ii} \frac 1 {\lambda_j} u_{ijm}^2 
\\ & \quad - 2 \sum_{(i,j,k,l) \in I_{a+s}} \sum_{m \leq a} F_{ij,kl} u_{ijm} u_{klm}.
\end{aligned} \end{equation*} 

Therefore 
\begin{equation} \begin{aligned} 
\triangle_F \mu & \leq  2 \sum_{i,j>a+s} \sum_{a < m \leq a +s } F_{ii} \frac{1}{\lambda_j} u_{ijm}^2
- \sum_{(i,j,k,l) \in I_{a+s} } \sum_{a < m \leq a +s }F_{ij,kl}  u_{ijm} u_{klm}   \\  
& \quad 
+ 4 \sum_{i,j> a+s }\sum_{m \leq a}  F_{ii} \frac 1 {\lambda_j} u_{ijm}^2 
- 2 \sum_{(i,j,k,l) \in I_{a+s}} \sum_{m \leq a} F_{ij,kl} u_{ijm} u_{klm} 
\\
  & \quad
+2 \sum_i \sum_{j>a+s} \sum_{a < m \leq a +s } F_{ii} \frac{ u^2_{ijm}}{\lambda_{m} -\lambda_j } + 2  \sum_i \sum_{j \leq a} \sum_{a < m \leq a +s } F_{ii} \frac{ u^2_{ijm}}{\lambda_{m} } \\  
& \quad+ 4 \sum_{i} \sum_{j > a} \sum_{m  \leq a} F_{ii} \frac{u^2_{ijm}}{- \lambda_j}\\
   & = 2 \sum_{i,j>a+s} \sum_{a < m \leq a +s } F_{ii} \frac{1}{\lambda_j} u_{ijm}^2
- \sum_{(i,j,k,l) \in I_{a+s}} \sum_{a < m \leq a +s }F_{ij,kl}  u_{ijm} u_{klm}  
 \\  
 & \quad 
+ 4 \sum_{i,j> a+s }\sum_{m \leq a}  F_{ii} \frac 1 {\lambda_j} u_{ijm}^2 
- 2 \sum_{(i,j,k,l) \in I_{a+s}} \sum_{m \leq a} F_{ij,kl} u_{ijm} u_{klm} 
\\ \end{aligned}\end{equation} \begin{equation*}\begin{aligned} & \quad 
+2 \sum_i \sum_{j>a+s} \sum_{a < m \leq a +s } F_{ii} \frac{ u^2_{ijm}}{\lambda_{m} -\lambda_j } + 2  \sum_i \sum_{j \leq a} \sum_{a < m \leq a +s } F_{ii} \frac{ u^2_{ijm}}{\lambda_{m} } 
\\  
& \quad+ 4 \sum_{i} \sum_{a < j \leq a+s } \sum_{m \leq a} F_{ii} \frac{u^2_{ijm}}{ - \lambda_j}+4 \sum_{i} \sum_{j > a+s} \sum_{m  \leq a} F_{ii} \frac{u^2_{ijm}}{ - \lambda_j}
\\   & \leq - \sum_{(i,j,k,l) \in I_{a+s} } \sum_{a < m \leq a +s } F_{ij,kl} u_{ijm } u_{klm} - 2 \sum_{(i,j,k,l) \in I_{a+s}  } \sum_{m \leq a  } F_{ij,kl} u_{ijm} u_{klm} 
\\   & \quad + 2 \sum_i \sum_{ j > a } \sum_{m \leq a } F_{ii} \frac{ u_{ijm}^2 }{ - \lambda_j }, \label{mu} 
\end{aligned} \end{equation*} 

because 
\begin{equation*}
 \sum_{i,j>a+s} \sum_{a < m \leq a +s } F_{ii} \frac{1}{\lambda_j} u_{ijm}^2+  \sum_i \sum_{j>a+s} \sum_{a < m \leq a +s } F_{ii} \frac{ u^2_{ijm}}{\lambda_{m} -\lambda_j } \leq 0 
\end{equation*} 
and 
\begin{equation*} 
 \sum_{i,j> a+s }\sum_{m \leq a}  F_{ii} \frac 1 {\lambda_j} u_{ijm}^2 
+ \sum_{i} \sum_{j > a+s} \sum_{m  \leq a} F_{ii} \frac{u^2_{ijm}}{ - \lambda_j}
\leq 0 . 
\end{equation*}

We now estimate the terms $F_{ij,kl} u_{ijm} u_{klm}$. We use the same technique as \cite[p.6527]{SW1}. First, 
\begin{equation*}    - \sum_{(i,j,k,l) \in I_{a+s} } \sum_{a<m \leq a+s } F_{ij,kl} u_{ijm } u_{klm} - 2 \sum_{(i,j,k,l) \in I_{a+s}} \sum_{m \leq a  } F_{ij,kl} u_{ijm} u_{klm}  \end{equation*} 
\begin{align*}    & \leq C (\| F \| _{C^2 }, \| u \|_{C^3 }, n,a,s)\sum_i \sum_{j,k \leq a+s } | u_{ijk} |\\ &  \leq C \left ( \sum_i \sum_{j < k \leq a+s } | u_{ijk} |   + \sum_i \sum_{j \leq a+s }  | u_{ijj} | \right ).  \end{align*} 

For $j \leq a$, $\partial_i \lambda_j =0$. 
If $\phi$ is a smooth function supporting $\lambda_{a+1}$ from below at $p$, then $\lambda_{a+1}$ is differentiable at $p$ because $ \mu /s$ supports $\lambda_{a+1}$ from above. Therefore Lemma \ref{onesided} implies that $D \lambda_{a+1}(p) = \dots = D \lambda_{a+s} (p). $ We then have $u_{ijj}(p) = \partial_i \lambda_{a+1} (p) $ if $a< j \leq a +s$.

For the remaining terms, we use the Cauchy-Schwarz inequality. It follows from Lemma \ref{onesided} that the matrix $(\partial_{i} u_{jk} ) _{j,k} $ is 0 in the upper left $a\times a$ block and the $s \times s$ block $(\partial_{i} u_{jk} )_{j,k=a+1}^{a+s} $ is diagonal. Therefore the only nonzero terms of the sum $\sum_i \sum_{j<k\leq a+s } |u_{ijk}|$ are those with $j \leq a < k.$ By assumption, $\mu>0$ at $p$,  so
\begin{equation*} | u_{ijk} | \leq \frac{u_{ijk}^2}{M \mu} + M \mu  \end{equation*} 
where $M$ is a large constant to be chosen. By ellipticity, 
\begin{equation*} \frac{ u_{ijk}^2}{M \mu } \leq C  \sum_i F_{ii} \frac{ u_{ijk}^2 } { M \mu } \frac{  \mu } { \lambda_k - \lambda_j } . \end{equation*} 
It follows that 
\begin{equation} \sum_i \sum_{j\leq a} \sum_{a < k \leq a+s }| u_{ijk}| \leq \frac C M \sum_i \sum_{j\leq a} \sum_{a < k \leq a+s } F_{ii} \frac{ u_{ijk}^2 }{ \lambda_k - \lambda_j }. \label{CSestimate} \end{equation} 
Combining \eqref{CSestimate} with the result of \eqref{mu}, we have \begin{equation} \label{finalestimate} \begin{aligned} \triangle_F \mu &\leq 2 \sum_i \sum_{j>a} \sum_{m \leq a } F_{ii} \frac{u_{ijm}^2}{\lambda_m -\lambda_j} + \frac C M \sum_i \sum_{j\leq a} \sum_{a < k \leq a+s } F_{ii} \frac{ u_{ijk}^2 }{ \lambda_k - \lambda_j }
\\ & \quad + CM \mu + \sum_i \sum_{j \leq a+s} | u_{ijj} | 
\\&  =  2 \sum_i \sum_{j>a} \sum_{m \leq a } F_{ii} \frac{u_{ijm}^2}{\lambda_m -\lambda_j} + \frac C M \sum_i \sum_{j\leq a} \sum_{a < k \leq a+s } F_{ii} \frac{ u_{ijk}^2 }{ \lambda_k - \lambda_j }
\\ & \quad + CM \mu + B | D \mu | .
\end{aligned} 
\end{equation} 
Provided $M$ is large enough, the sum of the first two terms in \eqref{finalestimate} is nonpositive, so 
\begin{equation*}  \triangle_F \mu \leq CM \mu + B | D \mu |.\end{equation*} 
It then follows that 
$$ \triangle_F \lambda_{a+1} \leq C \lambda_{a+1} + B | D \lambda_{a+1}| $$ in the viscosity sense on the set $\{\lambda_{a+1}>0\}$. That the differential inequality holds also on the set $\{ \lambda_{a+1}=0\}$ follows from \eqref{l1viscosity}. \par 
Now the theorem follows by the strong minimum principle and induction on $a$. 
\end{proof}

\textbf{Acknowledgments. }{We are very grateful to Pengfei Guan for initiating this project and generously sharing his insights. WJO is supported by the NSF Graduate Research Fellowship Program under grant No. DGE-2140004. YY is partially
supported by NSF grant No. DMS-2054973.}

\end{document}